\newif\ifdraft
\definecolor{labelkey}{gray}{0.5}
\newlength{\myarrowsize} 
\newenvironment{diagram*}[2]{%
\[%
\begin{tikzpicture}[>=cmto,baseline=(current bounding box.center),%
	to/.style={->,font=\scriptsize,cap=round},%
	into/.style={cmhook->,font=\scriptsize,cap=round},%
	onto/.style={-cmonto,font=\scriptsize,cap=round},%
	math/.style={matrix of math nodes, row sep=#2, column sep=#1,%
		text height=1.5ex, text depth=0.25ex}]%
}{%
\end{tikzpicture}%
\]%
\ignorespacesafterend%
}
\newcommand{\MHM}{\operatorname{MHM}}
\newcommand{\Dmod}{\mathscr{D}}
\newcommand{\Mmod}{\mathcal{M}}
\newcommand{\ZZ}{\mathbb{Z}}
\newcommand{\QQ}{\mathbb{Q}}
\newcommand{\CC}{\mathbb{C}}
\newcommand{\PP}{\mathbb{P}}
\DeclareMathOperator{\gr}{gr}
\newcommand{\shf}[1]{\mathscr{#1}}
\def\overbar#1#2#3{{%
	\setbox0=\hbox{\displaystyle{#1}}%
	\dimen0=\wd0
	\advance\dimen0 by -#2 
	\vbox {\nointerlineskip \moveright #3 \vbox{\hrule height 0.3pt width \dimen0}%
		\nointerlineskip \vskip 1.5pt \box0}%
}}
\newcommand{\shF}{\shf{F}}
\newcommand{\shO}{\shf{O}}
\let\@@seccntformat\@seccntformat
\renewcommand*{\@seccntformat}[1]{%
  \expandafter\ifx\csname @seccntformat@#1\endcsname\relax
    \expandafter\@@seccntformat
  \else
    \expandafter
      \csname @seccntformat@#1\expandafter\endcsname
  \fi
    {#1}%
}
\newcommand*{\@seccntformat@subsection}[1]{%
  \textbf{\csname the#1\endcsname.}
}
\let\@paragraph\paragraph
\renewcommand*{\paragraph}[1]{%
	\vspace{0.3\baselineskip}%
	\@paragraph{\textit{#1}}%
}
\newtheorem*{theorem*}{Theorem}
\newtheorem{lemma}[equation]{Lemma}
\newtheorem*{lemma*}{Lemma}
\newtheorem{corollary}[equation]{Corollary}
\newtheorem{proposition}[equation]{Proposition}
\newtheorem*{proposition*}{Proposition}
\theoremstyle{definition}
\newtheorem{definition}[equation]{Definition}
\newtheorem*{definition*}{Definition}
\newtheorem{remark}[equation]{Remark}
\newtheorem{example}[equation]{Example}
\newtheorem*{example*}{Example}
\newtheorem*{problem*}{Problem}
\theoremstyle{plain}
\newcommand{\theoremref}[1]{\hyperref[#1]{Theorem~\ref*{#1}}}
\newcommand{\lemmaref}[1]{\hyperref[#1]{Lemma~\ref*{#1}}}
\newcommand{\definitionref}[1]{\hyperref[#1]{Definition~\ref*{#1}}}
\newcommand{\propositionref}[1]{\hyperref[#1]{Proposition~\ref*{#1}}}
\newcommand{\conjectureref}[1]{\hyperref[#1]{Conjecture~\ref*{#1}}}
\newcommand{\corollaryref}[1]{\hyperref[#1]{Corollary~\ref*{#1}}}
\newcommand{\exampleref}[1]{\hyperref[#1]{Example~\ref*{#1}}}
\let\old@caption\caption
\renewcommand*{\caption}[1]{%
	\setcounter{figure}{\value{equation}}%
	\stepcounter{equation}%
	\old@caption{#1}\relax%
}
\newcounter{intro}
\newtheorem{intro-conjecture}[intro]{Conjecture}
\newtheorem{intro-corollary}[intro]{Corollary}
\newtheorem{intro-theorem}[intro]{Theorem}
\newcommand{\parref}[1]{\hyperref[#1]{\S\ref*{#1}}}
\newcommand*\if@single[3]{%
  \setbox0\hbox{${\mathaccent"0362{#1}}^H$}%
  \setbox2\hbox{${\mathaccent"0362{\kern0pt#1}}^H$}%
  \ifdim\ht0=\ht2 #3\else #2\fi
  }
\newcommand*\rel@kern[1]{\kern#1\dimexpr\macc@kerna}
\newcommand*\widebar[1]{\@ifnextchar^{{\wide@bar{#1}{0}}}{\wide@bar{#1}{1}}}
\newcommand*\wide@bar[2]{\if@single{#1}{\wide@bar@{#1}{#2}{1}}{\wide@bar@{#1}{#2}{2}}}
\newcommand*\wide@bar@[3]{%
  \begingroup
  \def\mathaccent##1##2{%
    \if#32 \let\macc@nucleus\first@char \fi
    \setbox\z@\hbox{$\macc@style{\macc@nucleus}_{}$}%
    \setbox\tw@\hbox{$\macc@style{\macc@nucleus}{}_{}$}%
    \dimen@\wd\tw@
    \advance\dimen@-\wd\z@
    \divide\dimen@ 3
    \@tempdima\wd\tw@
    \advance\@tempdima-\scriptspace
    \divide\@tempdima 10
    \advance\dimen@-\@tempdima
    \ifdim\dimen@>\z@ \dimen@0pt\fi
    \rel@kern{0.6}\kern-\dimen@
    \if#31
      \overline{\rel@kern{-0.6}\kern\dimen@\macc@nucleus\rel@kern{0.4}\kern\dimen@}%
      \advance\dimen@0.4\dimexpr\macc@kerna
      \let\final@kern#2%
      \ifdim\dimen@<\z@ \let\final@kern1\fi
      \if\final@kern1 \kern-\dimen@\fi
    \else
      \overline{\rel@kern{-0.6}\kern\dimen@#1}%
    \fi
  }%
  \macc@depth\@ne
  \let\math@bgroup\@empty \let\math@egroup\macc@set@skewchar
  \mathsurround\z@ \frozen@everymath{\mathgroup\macc@group\relax}%
  \macc@set@skewchar\relax
  \let\mathaccentV\macc@nested@a
  \if#31
    \macc@nested@a\relax111{#1}%
  \else
    \def\gobble@till@marker##1\endmarker{}%
    \futurelet\first@char\gobble@till@marker#1\endmarker
    \ifcat\noexpand\first@char A\else
      \def\first@char{}%
    \fi
    \macc@nested@a\relax111{\first@char}%
  \fi
  \endgroup
}
\newcommand{\I}{\mathcal{I}}
\newcommand{\Gr}{\operatorname{gr}}
\def\ZZ{{\mathbf Z}}
\def\CC{{\mathbf C}}
\def\QQ{{\mathbf Q}}
\def\PP{{\mathbf P}}
\begin{document}

\vspace{\baselineskip}

\title[Restriction theorem for Hodge ideals]{Restriction, subadditivity, and semicontinuity theorems for Hodge ideals}

\author[M. Musta\c{t}\v{a}]{Mircea~Musta\c{t}\u{a}}
\address{Department of Mathematics, University of Michigan,
Ann Arbor, MI 48109, USA}
\email{{\tt mmustata@umich.edu}}

\author[M.~Popa]{Mihnea~Popa}
\address{Department of Mathematics, Northwestern University, 
2033 Sheridan Road, Evanston, IL
60208, USA} \email{{\tt mpopa@math.northwestern.edu}}

\thanks{MM was partially supported by NSF grants DMS-1401227 and DMS-1265256; MP was partially supported by NSF grant
DMS-1405516 and a Simons Fellowship.}

\subjclass[2010]{14J17; 14F18; 32S25; 32S30}

\begin{abstract}
We prove results concerning the behavior of Hodge ideals under restriction to hypersurfaces or fibers of morphisms, and addition. 
The main tool is the description of restriction functors for mixed Hodge modules by means of the $V$-filtration.
\end{abstract}

\maketitle


\section{Introduction}

To any reduced effective divisor $D$ on a smooth complex variety $X$ one can associate a sequence of ideal sheaves
$I_k (D) \subseteq \shO_X$, with $k \ge 0$, called Hodge ideals. They arise from M. Saito's Hodge filtration $F_\bullet$ on  
$\omega_X(*D)$, the sheaf of top forms with arbitrary poles along $D$, seen as the filtered right $\Dmod_X$-module underlying the push-forward of the trivial mixed Hodge module via the open embedding $j \colon U = X \smallsetminus D \hookrightarrow X$. Indeed, it follows from \cite{Saito-B} that one can write 
this filtration as
$$F_k \omega_X (*D) =  \omega_X \big((k+1)D\big) \otimes I_k (D) \,\,\,\,\,\,{\rm for~all} \,\,\,\, k \ge 0.$$
An alternative construction is provided in \cite{MP}. In either approach it is well-understood  that $I_0 (D)$ is a particular example 
of a multiplier ideal, as in \cite{Lazarsfeld}.

The aim of this paper is to establish, in the case of Hodge ideals, the analogues of one of the main properties of multiplier ideals, namely 
the Restriction Theorem \cite[Theorem 9.5.1]{Lazarsfeld}, and of some of its most important consequences. This behavior of restriction to hypersurfaces is exploited further in \cite{MP}.

\begin{intro-theorem}\label{restriction}
Let $X$ be a smooth variety, $D$ a reduced effective divisor on $X$, and $H$ a smooth divisor on $X$ such that 
$H \not\subseteq {\rm Supp}(D)$. If we denote $D_H=D\vert_H$ and $D'_H=(D_H)_{\rm red}$, then for every $k\geq 0$
we have
\begin{equation}\label{eq1_restriction}
\shO_X\big(-(k+1)(D_H-D'_H)\big)\cdot I_k(D'_H)\subseteq I_k(D)\cdot\shO_H.
\end{equation}
In particular, if $D_H$ is reduced, then for every $k\geq 0$ we have
\begin{equation}\label{eq2_restriction}
I_k (D_{H}) \subseteq I_k(D) \cdot \shO_H.
\end{equation}
Moreover, if $H$ is sufficiently general (e.g. a general member of a basepoint-free linear system), then we have equality in (\ref{eq2_restriction})
\end{intro-theorem}

One can easily deduce that the inclusion (\ref{eq2_restriction}) in the theorem still  holds after replacing $H$ with any smooth closed subvariety $Z$
such that $D|_Z$ is reduced; see Corollary \ref{higher_codimension}. For arbitrary multiplier ideals, a restriction theorem to subvarieties
satisfying a natural transversality property with respect to $D$, and further related results, can be found in \cite{DMST}.

We apply Theorem \ref{restriction} in order to establish a relationship between the Hodge ideals of a sum of divisors and those of the individual summands.

\begin{intro-theorem}\label{subadditivity}
If $D_1$ and $D_2$ are effective divisors on the smooth variety $X$, such that $D_1+D_2$ is reduced, then for every $k\geq 0$ 
we have
$$I_k(D_1+D_2)\subseteq\sum_{i+j=k}I_i(D_1)\cdot I_j(D_2)\cdot \shO_X(-jD_1-iD_2)\subseteq I_k(D_1)\cdot I_k(D_2).$$
\end{intro-theorem}

Note that for $k=0$, the assertion is an instance of the Subadditivity Theorem for multiplier ideals; see \cite{DEL}, and also \cite[Theorem~9.5.20]{Lazarsfeld}. 
We will also show that given $D_1$ and $D_2$ as in Theorem~\ref{subadditivity}, we have the inclusions
$$\shO_X\big(-(k+1)D_2\big)\cdot I_k(D_1)\subseteq I_k(D_1+D_2)\subseteq I_k(D_1)$$
for every $k\geq 0$; see Proposition~\ref{behavior_sum} below. The first inclusion follows from the definition of Hodge ideals,
while the second inclusion is a direct consequence of Theorem~\ref{subadditivity}.

Another, this time immediate, consequence of Theorem \ref{restriction}, is a version of inversion of adjunction for $k$-log-canonicity. Recall from \cite{MP} that 
the pair $(X, D)$ is called $k$-log canonical at $x$ if $I_k (D)_x = \shO_{X, x}$; by \cite[Theorem 13.1]{MP} this implies that 
$I_p (D)_x = \shO_{X, x}$ for all $p \le k-1$ as well.

\begin{intro-corollary}\label{inversion}
With the notation of Theorem \ref{restriction}, if for some $x \in H$ the pair $(H, D|_H)$ is $k$-log-canonical at $x$ (hence in particular $D|_H$ is reduced), 
then so is the pair $(X, D)$.
\end{intro-corollary}

A sample application of this fact is a bound on the level of $k$-log-canonicity at a point in terms of the dimension of the singular 
locus of its projective tangent cone; see also \cite[Example 20.5]{MP}.

\begin{intro-corollary}\label{singular_locus_cone}
Let $X$ be a smooth $n$-dimensional variety, $D$ a reduced effective divisor, and $x \in D$ with ${\rm mult}_x (D) = m$ and 
$\dim ~{\rm Sing} \big(\PP (C_x D) \big) = r$. Then 
$$ k \le \frac{n - r - 1}{m} - 1  \,\,\,\, \implies  \,\,\,\, I_k (D)_x = \shO_{X,x},$$
with the convention that $r =-1$ when the projective tangent cone is smooth.
\end{intro-corollary}

Indeed, in the case when $\PP (C_x D)$ is smooth, i.e. for ordinary singularities, the statement is proved (with equality) in 
\cite[Theorem D]{MP}. In general, by taking an affine neighborhood of $x$ with coordinates $x_1, \ldots, x_n$, and considering 
$H$ to be a general linear combination of the $x_i$'s, Corollary \ref{inversion} implies that we can inductively reduce  to the case when the dimension of ${\rm Sing} \big(\PP (C_x D) \big)$ is $r-1$, and therefore conclude. Note that, as explained in \cite[Theorem D]{MP}
and the paragraph preceding it, Corollary \ref{singular_locus_cone} implies that if $X$ is projective and all the singular points of $D$
have multiplicity at most $m$ and projective tangent cone with singular locus of dimension at most $r$, then 
$$F_p H^\bullet (U, \CC) = P_p H^\bullet (U, \CC) \,\,\,\,\,\,{\rm for ~all}\,\,\,\, p \le \frac{n - r - 1}{m} - n -1.$$
Here $F_\bullet$ and $P_\bullet$ are the Hodge and pole order filtration on the singular cohomology of the complement 
$U = X \setminus D$, starting in degree $-n$.

Another useful application of Theorem \ref{restriction} regards the behavior of Hodge ideals in families, in analogy with the 
semicontinuity result for multiplier ideals in \cite[9.5.D]{Lazarsfeld}; this is used in \cite{MP} in order to establish effective nontriviality criteria for Hodge ideals.  

We first fix some notation. Let $h\colon X\to T$ be a smooth morphism of relative dimension $n$
between arbitrary varieties $X$ and $T$, and $s\colon T\to X$ a morphism such that $h\circ s={\rm id}_T$.
Suppose that $D$ is an effective Cartier divisor on $X$, relative over $T$, such that for every $t\in T$ the restriction $D_t$ 
of $D$ to the fiber $X_t=h^{-1}(t)$
is reduced. For every $x\in X$, we denote by $\mathfrak{m}_x$ the ideal defining $x$ in $X_{h(x)}$.

\begin{intro-theorem}\label{semicontinuity}
With the above notation,  for every $q\geq 1$, the set
$$V_q:=\big\{t\in T\mid I_k(D_t)\not\subseteq \mathfrak{m}_{s(t)}^q\big\},$$
is open in $T$. This applies in particular to the set 
$$V_1 = \big\{t\in T\mid (X_t, D_t) {\rm ~is~} k{\rm -log~canonical~at}\,\,s(t) \}.$$
\end{intro-theorem}

Regarding the methods we use, while the emphasis in \cite{MP} is on a birational definition and study of the ideals $I_k(D)$, the approach in this paper relies on their definition and study via mixed Hodge module theory \cite{Saito-MHP}, \cite{Saito-MHM}. In particular, the key technical tools are the Kashiwara-Malgrange $V$-filtration, and the functors of nearby and vanishing cycles. We do not know how to 
obtain the restriction theorem for higher Hodge ideals similarly (or by reduction) to the argument used for multiplier ideals in \cite{Lazarsfeld}.

\medskip

\noindent
{\bf Acknowledgements.}
We thank Christian Schnell for useful suggestions, and especially for help with the global description of the $i^{!}$ functor in \S\ref{shriek}.
We also thank Morihiko Saito and a referee for helpful comments. The second author is grateful to the math department at Stony Brook University for hospitality during the preparation of the paper, and the Simons Foundation for fellowship support.

\section{Background and preliminary results}

For much more background regarding filtered $\Dmod$-modules and mixed Hodge modules, especially in the context of 
Hodge ideals, please see \cite[\S1-10]{MP}. We recall further definitions and facts needed here.

\subsection{$V$-filtration and Hodge filtration}
Let $X$ be a smooth complex variety of dimension $n$ and let $H$ be a smooth divisor on 
$X$. Whenever needed, we choose a local equation $t$ for $H$ and denote by $\partial_t$
a vector field with the property that $[\partial_t,t]=1$. To $H$ one associates an increasing rational filtration\footnote{All rational 
filtrations $F_{\bullet}$ that we consider are discrete, that is, there is a positive integer $d$ such that $F_{\alpha}=F_{i/d}$ whenever 
$\frac{i}{d}\leq \alpha<\frac{i+1}{d}$, with $i\in\ZZ$.}
on the sheaf of differential operators $\Dmod_X$, given by
$$V_{\alpha} \Dmod_X = \{ P \in \Dmod_X~|~ P \cdot \I_{H}^j \subseteq \I_{H}^{j - [\alpha]}\} \,\,\,\,{\rm for~} 
\alpha \in \QQ,$$
where $\I_{H}$ is the ideal of $H$ in $\shO_X$, with the convention that $\I_{H}^j  = \shO_X$ for $j \le 0$.

\begin{definition}[{\cite[3.1.1]{Saito-MHP}}]
Let $\Mmod$ be a coherent right $\Dmod_X$-module. A \emph{Kashiwara-Malgrange $V$-filtration} of $\Mmod$ 
along $H$ is an increasing rational filtration $V_{\bullet}\Mmod$, satisfying the 
following properties:

\noindent
$\bullet$\,\,\,\,The filtration is exhaustive, i.e. $\bigcup_{\alpha} V_{\alpha} \Mmod = \Mmod$, and each $V_{\alpha} \Mmod$
is a coherent $V_0 \Dmod_X$-submodule of $\Mmod$.

\noindent
$\bullet$\,\,\,\,$V_\alpha \Mmod \cdot V_i \Dmod_X \subseteq V_{\alpha + i } \Mmod$ for every $\alpha \in \QQ$ and 
$i \in \ZZ$; furthermore
$$V_{\alpha} \Mmod \cdot t = V_{\alpha-1} \Mmod \,\,\,\, {\rm for ~} \alpha < 0.$$

\noindent
$\bullet$\,\,\,\,The action of $t\partial_t -\alpha$ on $\Gr_{\alpha}^V \Mmod$ is nilpotent for each $\alpha\in\QQ$. Here one defines 
$$\gr_{\alpha}^V \Mmod = V_{\alpha} \Mmod / V_{< \alpha} \Mmod,$$ 
where $V_{< \alpha} \Mmod = \cup_{\beta < \alpha} V_{\beta} \Mmod$.
\end{definition}

Assume now that $\Mmod$ is endowed with a good filtration $F$; see \cite[\S2.1]{HTT}. By definition we put
$$F_p V_{\alpha}\Mmod = F_p \Mmod \cap V_{\alpha} \Mmod$$
and
$$F_p \gr_{\alpha}^V \Mmod = \frac{F_p \Mmod \cap V_{\alpha} \Mmod}{F_p \Mmod \cap V_{< \alpha} \Mmod}.$$

\begin{definition}[{\cite[3.2.1]{Saito-MHP}}]\label{quasi-unipotent}
We say that $(\Mmod, F)$ \emph{admits a rational $V$-filtration along $H$}
if $\Mmod$ has a Kashiwara-Malgrange $V$-filtration along $H$ and the following conditions are satisfied:

\noindent
$\bullet$ \,\,\,\, $(F_p V_{\alpha} \Mmod)\cdot t = F_p V_{\alpha - 1} \Mmod$ \,\, for \,\,$\alpha < 0$.

\noindent
$\bullet$ \,\,\,\, $(F_p \gr_{\alpha}^V \Mmod)\cdot \partial_t = F_{p+1} \gr_{\alpha+1}^V \Mmod$  \,\, for  \,\, 
$\alpha > -1$.

\noindent
Moreover,  $(\Mmod, F)$ is \emph{regular and quasi-unipotent along $H$} if in addition the filtration 
$F_{\bullet} \gr_{\alpha}^V \Mmod$ is a good filtration for $-1 \le \alpha \le 0$.
\end{definition}

If $(\Mmod, F)$ is the filtered $\Dmod$-module underlying a mixed Hodge module, it is known that a rational 
$V$-filtration on $\Mmod$ exists, and is unique. In addition, $(\Mmod, F)$ is required by definition 
to be regular and quasi-unipotent along any smooth divisor $H$ \cite{Saito-MHP}; see also \cite[\S11 and \S12]{Schnell-MHM} for a nice discussion.

Given a filtered $\Dmod$-module $(\Mmod, F)$ on $X$, a sufficiently general hypersurface on $X$ (for instance a general member of a 
basepoint-free linear system) is non-characteristic with respect to $(\Mmod, F)$. For the definition,  see \cite[3.5.1]{Saito-MHP}; we will only use this notion here via the following lemma.

\begin{lemma}[{\cite[Lemme 3.5.6]{Saito-MHP}}]\label{noncharacteristic}
Let $i\colon H \hookrightarrow X$ be the inclusion of a smooth divisor in a smooth complex variety, and let $(\Mmod, F)$ be a filtered coherent right $\Dmod_X$-module for which $H$ is non-characteristic. Then:
\begin{enumerate}
\item $(\Mmod, F)$ is regular and quasi-unipotent along $H$.

\item The $V$-filtration on $\Mmod$ is given by 
$$V_{\alpha} \Mmod = \Mmod \cdot \shO_X (- j H) \,\,\,\,{\rm for~} -j-1 \le \alpha < -j, \,\,j \ge 0 \,\,\,\,{\rm and~} V_{\alpha} \Mmod = \Mmod \,\,
{\rm for~}\alpha \ge 0.$$
\end{enumerate}
\end{lemma}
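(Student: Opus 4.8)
The assertion is local along $H$, so we may fix a local equation $t$ for $H$ together with a vector field $\partial_t$ as in the statement, and write $W_\bullet\Mmod$ for the filtration described in part (2): thus $W_\alpha\Mmod=\Mmod t^j$ when $-j-1\le\alpha<-j$ with $j\ge 0$, and $W_\alpha\Mmod=\Mmod$ for $\alpha\ge 0$. The plan is first to check that $W_\bullet$ satisfies the three axioms of a Kashiwara--Malgrange $V$-filtration, so that $W_\bullet\Mmod=V_\bullet\Mmod$ by uniqueness --- this is exactly assertion (2) --- and then to deduce (1) by verifying the compatibility conditions between $F$ and $V_\bullet=W_\bullet$ in \definitionref{quasi-unipotent}. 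The non-characteristic hypothesis enters only through a few standard local consequences (see \cite[\S3.5]{Saito-MHP}), which I would record at the outset: (a) $t\colon\Mmod\to\Mmod$ is injective; (b) $\Mmod$, and hence each $\Mmod t^j$, is coherent over $V_0\Dmod_X$; (c) multiplication by $t$ is strictly compatible with $F$, i.e. $F_p\Mmod\cap t\Mmod=(F_p\Mmod)t$ for all $p$, and the filtration induced by $F$ on $\Mmod/t\Mmod$ makes it a coherent $\Dmod_H$-module with a good filtration.

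For the $V$-filtration axioms: exhaustiveness is clear, and coherence of the $W_\alpha\Mmod$ over $V_0\Dmod_X$ is (b). For the compatibilities $W_\alpha\Mmod\cdot V_i\Dmod_X\subseteq W_{\alpha+i}\Mmod$ and $W_\alpha\Mmod\cdot t=W_{\alpha-1}\Mmod$ for $\alpha<0$, I would use $t^j\in V_{-j}\Dmod_X$, the local identity $V_{-j}\Dmod_X=V_0\Dmod_X\cdot t^j=t^j\cdot V_0\Dmod_X$ (a one-line commutator check), and $\Mmod\cdot V_0\Dmod_X=\Mmod$; these reduce the inclusions to bookkeeping. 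Finally $\gr_\alpha^W\Mmod$ vanishes unless $\alpha=-j$ is a negative integer, where $\gr_{-j}^W\Mmod=\Mmod t^{j-1}/\Mmod t^j$; from the identity $t^j\partial_t=\partial_t t^j-j t^{j-1}$ in $\Dmod_X$ one reads off that $t\partial_t$ acts on $\gr_{-j}^W\Mmod$ as multiplication by $-j=\alpha$, so $t\partial_t-\alpha$ acts as zero, a fortiori nilpotently. By uniqueness of the Kashiwara--Malgrange $V$-filtration this identifies $W_\bullet$ with $V_\bullet$, which is (2).

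It remains to check the conditions of \definitionref{quasi-unipotent} for $V_\bullet=W_\bullet$. Writing $F_pV_\alpha\Mmod=F_p\Mmod\cap\Mmod t^j$ when $-j-1\le\alpha<-j$, the identity $(F_pV_\alpha\Mmod)\cdot t=F_pV_{\alpha-1}\Mmod$ follows from the equality $F_p\Mmod\cap\Mmod t^j=(F_p\Mmod)t^j$, which I would obtain by iterating the strictness in (c) together with the injectivity in (a). The condition $(F_p\gr_\alpha^V\Mmod)\cdot\partial_t=F_{p+1}\gr_{\alpha+1}^V\Mmod$ for $\alpha>-1$ is vacuous, since $\gr_\alpha^V\Mmod=0$ for every $\alpha>-1$. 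Lastly $F_\bullet\gr_\alpha^V\Mmod$ must be a good filtration for $-1\le\alpha\le 0$: it vanishes for $-1<\alpha\le 0$, and for $\alpha=-1$ it is the filtration induced on $\Mmod/t\Mmod$, which is good over $\Dmod_H$ by (c). This proves (1).

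The only non-formal ingredient is the package (a)--(c): that restriction to a non-characteristic divisor is exact on $\Mmod$ (so $t$ is injective), preserves coherence, and is strictly compatible with $F$ --- equivalently, that $t$ is a non-zero-divisor on $\gr^F\Mmod$. This is precisely what the non-characteristic hypothesis says at the level of associated graded modules, and I would either invoke it directly from \cite{Saito-MHP} or re-derive it from the definition of non-characteristic via a local good filtration; once it is in hand, everything else is routine manipulation of the $V$-filtration on $\Dmod_X$.
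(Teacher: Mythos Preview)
The paper does not provide its own proof of this lemma: it is quoted from \cite[Lemme~3.5.6]{Saito-MHP} and used as a black box. Your proposal is a correct reconstruction of the standard argument --- verifying the $V$-filtration axioms for the candidate filtration, invoking uniqueness, and then checking the conditions of \definitionref{quasi-unipotent} via the $F$-strictness and injectivity of multiplication by $t$ --- and this is essentially how Saito proves it in the cited reference.
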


\subsection{The functor $i^{!}$ for a closed embedding}\label{shriek}
Recall that given a morphism of (not necessarily smooth) complex algebraic varieties $f\colon X\to Y$, we have an induced exact functor
$$f^!\colon {\rm D}^b\MHM(Y)\to {\rm D}^b\MHM(X)$$
between the corresponding derived categories of mixed Hodge modules (see \cite[\S 4.2]{Saito-MHM}). This is by definition the 
right adjoint of the direct image with compact support $f_!$. If $g\colon Y\to Z$ is another morphism, then we have 
\begin{equation}\label{eq_funct_transformation}
f^!\circ g^!\simeq (g\circ f)^!.
\end{equation}

One easy case is when $f$ is an open immersion, when $f^!=f^{-1}$ is the restriction to $U$. 
We will mostly be interested in the case when $f=i$ is a closed embedding. In this case,  if $j\colon Y\smallsetminus X\hookrightarrow Y$ is the inclusion of the complement of $X$,
then for every $M \in {\rm D}^b\MHM(Y)$, we have a functorial distinguished triangle (see \cite[4.4.1]{Saito-MHM}):
\begin{equation}\label{eq_distinguished_trg}
i_*i^! M \longrightarrow M \longrightarrow j_*j^{-1} M \longrightarrow i_*i^! M[1].
\end{equation}
It follows from this exact triangle that if $M\in {\rm D}^b\MHM(Y)$ is such that $j^{-1}M=0$, then the canonical morphism
$i_*i^!M \to M$ is an isomorphism. We have analogous statements for the underlying object in the derived category of filtered 
$\Dmod$-modules.

We record here the following lemma, for future reference.

\begin{lemma}\label{restriction_to_components}
Let $Y$ be a variety with irreducible components $Y_1,\ldots,Y_r$, and for every $k$ with $1\leq k\leq r$, let $i_k\colon Y_k\hookrightarrow Y$ be the inclusion.
If $M \in {\rm D}^b\MHM(Y)$ is such that $i_k^! M =0$ for $2\leq k\leq r$, then the canonical morphism ${i_1}_*i_1^!M \to M$ is an isomorphism.
\end{lemma}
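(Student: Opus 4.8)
The plan is to reduce \lemmaref{restriction_to_components} to the open/closed distinguished triangle \eqref{eq_distinguished_trg} by an induction on the number $r$ of irreducible components. The base case $r = 1$ is trivial: then $i_1 = \id_Y$ and the hypothesis is vacuous, so the claim holds. For the inductive step, set $Z = Y_2 \cup \cdots \cup Y_r$, viewed as a closed subvariety of $Y$, with closed embedding $\iota\colon Z \hookrightarrow Y$. Let $U = Y \smallsetminus Z$ be the open complement, with open immersion $j\colon U \hookrightarrow Y$. The key geometric observation is that $U$ is a (possibly empty) open subset of $Y_1$: indeed every point of $U$ lies in some component $Y_\ell$, and since it avoids $Z$ it must have $\ell = 1$; conversely $U$ is open in $Y$ hence open in $Y_1$. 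Thus $j$ factors as $Y \smallsetminus Z \hookrightarrow Y_1 \hookrightarrow Y$, i.e. $j = i_1 \circ j'$ with $j'\colon U \hookrightarrow Y_1$ an open immersion.

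First I would apply the triangle \eqref{eq_distinguished_trg} with the closed embedding $\iota$ in place of $i$: for our $M$ we get a distinguished triangle
\begin{equation*}
\iota_* \iota^! M \longrightarrow M \longrightarrow j_* j^{-1} M \longrightarrow \iota_* \iota^! M[1].
\end{equation*}
Now I claim $\iota^! M = 0$. By \eqref{eq_funct_transformation}, restricting $\iota^! M$ to each component $Y_k$ of $Z$ (for $2 \le k \le r$, with the inclusion $Y_k \hookrightarrow Z$) gives $i_k^! M = 0$ by hypothesis; since an object of ${\rm D}^b\MHM(Z)$ whose restriction to every irreducible component vanishes is itself zero (restriction to the smooth locus of each component detects vanishing, and these smooth loci cover a dense open subset — one argues componentwise, or simply invokes that $\iota^!M$ supported on a proper closed subset of $Z$ would again be killed by the $i_k^!$), we conclude $\iota^! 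M = 0$. Therefore the triangle collapses to an isomorphism $M \xrightarrow{\ \sim\ } j_* j^{-1} M$.

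Next, using $j = i_1 \circ j'$ together with \eqref{eq_funct_transformation} and the fact that for an open immersion $j^! = j^{-1}$, we can rewrite $j_* j^{-1} M \simeq {i_1}_* j'_* (j')^{-1} i_1^{-1} M$. Applying the triangle \eqref{eq_distinguished_trg} this time to the closed embedding $i_1'\colon Y_1 \cap Z \hookrightarrow Y_1$ with complement $j'$, for the object $N := i_1^{-1} M \in {\rm D}^b\MHM(Y_1)$, gives
\begin{equation*}
{i_1'}_* (i_1')^! N \longrightarrow N \longrightarrow j'_* (j')^{-1} N \longrightarrow {i_1'}_*(i_1')^! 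N[1],
\end{equation*}
so ${i_1}_*\!\bigl(j'_*(j')^{-1}N\bigr)$ is identified, via the previous paragraph, with $M$. Pushing the whole triangle forward by ${i_1}_*$ and comparing with the canonical map ${i_1}_* i_1^! M \to M$ — here I would use that $i_1^! M \to i_1^! i_1{}_* i_1^! M$ and the adjunction/base-change compatibilities identify $i_1^! M$ with the cone term appropriately — shows that the canonical morphism ${i_1}_* i_1^! M \to M$ fits into the same triangle and hence is the composite of the isomorphism $M \simeq j_*j^{-1}M \simeq {i_1}_* j'_*(j')^{-1}N$ with the map induced by $N \to j'_*(j')^{-1}N$; one checks this map is itself an isomorphism because $(i_1')^! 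N = i_1^! M$ restricted appropriately, and... rather than belabor this, the cleanest route is: apply \eqref{eq_distinguished_trg} to the closed embedding $i_1$ directly, obtaining ${i_1}_* i_1^! M \to M \to k_* k^{-1} M \to {i_1}_* i_1^! M[1]$ where $k\colon Y \smallsetminus Y_1 \hookrightarrow Y$, and then observe $Y \smallsetminus Y_1 \subseteq Z$ so that $k$ factors through $\iota$, whence $k^{-1} M = k'^{-1}\iota^{-1}M$; but we will want $\iota^! M = 0$ rather than $\iota^{-1}M$, so instead apply $k^{-1} = k^!$ to the $\iota$-triangle...

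The cleanest correct argument, which I would actually write, is the following two-step one: (a) by the hypothesis and the vanishing-detection principle above, $\iota^! M = 0$, so $M \simeq j_* j^{-1} M$ with $j\colon Y \smallsetminus Z \hookrightarrow Y$; (b) since $Y \smallsetminus Y_1 \subseteq Z$, the open set $Y \smallsetminus Z$ equals $Y_1 \smallsetminus Z$ and sits inside $Y_1$, so $j$ factors as $Y\smallsetminus Z \xhookrightarrow{j'} Y_1 \xhookrightarrow{i_1} Y$, giving $M \simeq {i_1}_*\bigl(j'_*(j')^{-1}i_1^{-1}M\bigr) = {i_1}_*\bigl(j'_*(j')^{-1}N\bigr)$ with $N = i_1^{-1}M$. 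Finally, applying \eqref{eq_distinguished_trg} on $Y_1$ to $N$ along the closed embedding $Y_1 \cap Z \hookrightarrow Y_1$ shows that the cone of the natural map $i_1^! M \to N$ has the same cohomology as $i_1^!\bigl({i_1}_* j'_*(j')^{-1}N\bigr)$; pushing forward and chasing the adjunction unit, the composite ${i_1}_* i_1^! M \to M \xrightarrow{\sim} {i_1}_* j'_*(j')^{-1}N$ is seen to be an isomorphism precisely because $(i_1')^!$ of that cone vanishes — and it vanishes since it computes $i_1^!\iota_*(\cdots)$-type terms supported on $Y_1\cap Z$, which are killed after the identification $M\simeq j_*j^{-1}M$. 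The main obstacle is this last bookkeeping: matching the \emph{canonical} adjunction morphism ${i_1}_* i_1^! M \to M$ with the isomorphism produced by the two triangles, which requires compatibility of the triangle \eqref{eq_distinguished_trg} with composition of closed embeddings and with $j_*$; I would handle it by invoking \cite[4.4.1]{Saito-MHM} and the functoriality in \eqref{eq_funct_transformation}, and — if a slicker path is desired — by noting that it suffices to check the isomorphism after applying $j^{-1}$ (both sides agree, as $i_1^{-1}j^{-1}=(j')^{-1}$) and after applying $\iota^!$ (both sides vanish), which determines the map by the triangle. The analogous statement for the underlying filtered $\Dmod$-modules follows verbatim, as noted after \eqref{eq_distinguished_trg}.
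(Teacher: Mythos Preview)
Your proposal contains the right ingredients but is tangled, and the confusion at the pivotal step is the source of the tangle. You actually state the correct clean route midway (``apply \eqref{eq_distinguished_trg} to the closed embedding $i_1$ directly'') and then abandon it because you think this would require $\iota^{-1}M=0$ rather than $\iota^!M=0$. That worry is unfounded: the open immersion $k\colon Y\smallsetminus Y_1\hookrightarrow Y$ satisfies $k^{-1}=k^!$, and $k$ factors as $Y\smallsetminus Y_1\xhookrightarrow{\alpha} Z\xhookrightarrow{\iota} Y$ with $\alpha$ open in $Z$; hence by \eqref{eq_funct_transformation}
\[
k^{-1}M \;=\; k^!M \;\simeq\; \alpha^!\,\iota^!M \;=\; \alpha^{-1}\bigl(\iota^!M\bigr)\;=\;0.
\]
This is exactly the paper's argument: apply \eqref{eq_distinguished_trg} with $i=i_1$, and show the third term $k_*k^{-1}M$ vanishes by the displayed line. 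No further ``bookkeeping'' with adjunctions or comparison of triangles is needed---the canonical morphism ${i_1}_*i_1^!M\to M$ \emph{is} the first arrow of that triangle, so it is an isomorphism as soon as the third vertex is zero.

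The other soft spot is your justification of $\iota^!M=0$. You invoke a ``vanishing-detection principle'' (restriction to smooth loci, or support shrinking), but this is itself a nontrivial assertion requiring proof. The paper isolates it as a separate lemma (if $i_k^!M=0$ for \emph{all} $1\le k\le r$ then $M=0$) and proves it by a clean induction on $r$: with $Y'=Y_2\cup\cdots\cup Y_r$ and $U=Y\smallsetminus Y'$, induction gives $i^!M=0$ for $i\colon Y'\hookrightarrow Y$, while $(i_1\circ j)^{-1}M=(i_1\circ j)^!M=j^!i_1^!M=0$ for $j\colon U\hookrightarrow Y_1$; then \eqref{eq_distinguished_trg} forces $M=0$. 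Your sketch gestures at this but does not carry it out. Once you fill that in and drop the detour through the $\iota$-triangle, the proof collapses to a few lines.
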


We first prove the following special case:

\begin{lemma}\label{restriction_to_components2}
Let $Y$ be a variety with irreducible components $Y_1,\ldots,Y_r$, and for every $k$ with $1\leq k\leq r$, let $i_k\colon Y_k\hookrightarrow Y$ be the inclusion.
If $M \in D^b\MHM(Y)$ is such that $i_k^! M =0$ for $1\leq k\leq r$, then $M  =  0$.
\end{lemma}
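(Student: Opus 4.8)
The plan is to argue by induction on the number $r$ of irreducible components of $Y$. When $r=1$ we have $Y=Y_1$ and $i_1=\id_Y$, so the hypothesis $i_1^!M=0$ reads $M=0$ and there is nothing to prove. This is the only case that is genuinely a base case; everything else is a reduction.

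For the inductive step, assume $r\geq 2$ and that the statement is known for every variety with fewer than $r$ irreducible components. Let $j\colon V:=Y\smallsetminus Y_1\hookrightarrow Y$ be the open immersion complementary to the closed embedding $i_1$. Since $i_1^!M=0$ we have ${i_1}_*i_1^!M=0$, so the distinguished triangle \eqref{eq_distinguished_trg} (applied to $i_1$) shows that the canonical morphism $M\to j_*j^{-1}M$ is an isomorphism. Hence it suffices to prove that $j^{-1}M=0$, and for this I would apply the inductive hypothesis on $V$.

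To do so, first identify the irreducible components of $V$: they are exactly the sets $Y_k\smallsetminus Y_1$ for $2\leq k\leq r$. Indeed, each such set is nonempty (as $Y_k\not\subseteq Y_1$, the $Y_i$ being the components of $Y$) and is a dense open subset of the irreducible variety $Y_k$, hence irreducible with closure $Y_k$; they cover $V$; and no one of them is contained in another (comparing closures would force $Y_k\subseteq Y_l$). Thus $V$ has $r-1$ irreducible components. Writing $i_k'\colon Y_k\smallsetminus Y_1\hookrightarrow V$ for the closed embeddings and $l_k\colon Y_k\smallsetminus Y_1\hookrightarrow Y_k$ for the open immersions, we have $j\circ i_k'=i_k\circ l_k$, and therefore, using \eqref{eq_funct_transformation} together with $j^!=j^{-1}$ and $l_k^!=l_k^{-1}$,
$$(i_k')^!(j^{-1}M)\simeq (j\circ i_k')^!M=(i_k\circ l_k)^!M\simeq l_k^{-1}(i_k^!M)=0\qquad(2\leq k\leq r).$$
By the inductive hypothesis applied to $V$ and to $j^{-1}M\in D^b\MHM(V)$ we conclude $j^{-1}M=0$, and hence $M\simeq j_*j^{-1}M=0$.

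The argument is essentially formal, relying only on the triangle \eqref{eq_distinguished_trg}, the composition law \eqref{eq_funct_transformation} for the functors $(-)^!$, and the fact that these reduce to restriction along open immersions. The only step requiring any care — and the closest thing here to an obstacle — is the bookkeeping that identifies the irreducible components of $V$ and verifies the commutativity $j\circ i_k'=i_k\circ l_k$, so that the vanishing hypotheses transfer correctly to $V$.
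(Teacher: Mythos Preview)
Your proof is correct and follows essentially the same approach as the paper: induction on $r$ via the distinguished triangle \eqref{eq_distinguished_trg} and the functoriality \eqref{eq_funct_transformation}. The only cosmetic difference is that the paper applies the inductive hypothesis to the closed subvariety $Y'=Y_2\cup\cdots\cup Y_r$ (and then uses $i_1^!M=0$ to kill the open part $Y\smallsetminus Y'\subseteq Y_1$), whereas you use $i_1^!M=0$ first and then apply induction on the open complement $V=Y\smallsetminus Y_1$.
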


\begin{proof}
We argue by induction on $r$, the case $r=1$ being trivial. For the induction step, let $Y'=Y_2\cup\cdots\cup Y_r$ and $U=Y\smallsetminus Y'$. 
If $i\colon Y'\hookrightarrow Y$ is the inclusion, it follows by induction, the hypothesis, and (\ref{eq_funct_transformation}) that 
$i^! M = 0$. 
On the other hand, if $j\colon U\hookrightarrow Y_1$ is the inclusion, then 
$$(i_1\circ j)^{-1}M \simeq (i_1\circ j)^! M \simeq j^!i_1^! M =  0.$$
By combining these, we deduce from (\ref{eq_distinguished_trg}) that $M= 0$.
\end{proof}

\begin{proof}[Proof of Lemma~\ref{restriction_to_components}]
It follows from (\ref{eq_distinguished_trg}) that it is enough to show that
 if $U=Y\smallsetminus Y_1$ and $j\colon U\hookrightarrow Y$ is the inclusion,
then $j^{-1}M =0$. But note that $U\subseteq Y_2\cup\cdots\cup Y_r$, and if 
$$U\overset{\alpha}\hookrightarrow Y_2\cup\cdots\cup Y_r\overset{\beta}\hookrightarrow Y$$
are the inclusions, then
$$j^! M \simeq \alpha^!\beta^! M  =  0,$$
since $\beta^! M  =  0$ by Lemma~\ref{restriction_to_components2}.
\end{proof}

\medskip

We now specialize to the case when $i\colon X\hookrightarrow Y$ is a closed embedding of smooth varieties, of codimension $1$. 
If $(\Mmod, F)$ is the filtered $\Dmod$-module underlying a 
 mixed Hodge module $M$ on $Y$, then $i^!\Mmod$ has cohomology only in degrees $0$ and $1$, and
${\mathcal H}^0i^!\Mmod$ and ${\mathcal H}^1i^!\Mmod$, with the induced filtrations, underlie mixed Hodge modules on $X$. 
They can be described explicitly in terms of nearby and vanishing cycles corresponding to the $V$-filtration along $X$, as follows. 

Assume first that $X$ is given on an open subset $U\subseteq Y$ by a local equation $t$. It is then well known that on $U$ 
the $\Dmod$-modules ${\mathcal H}^0i^!\Mmod$ and ${\mathcal H}^1i^!\Mmod$ are the cohomologies of the morphism 
$${\rm Var} = \cdot t \colon \Gr^V_0\Mmod \longrightarrow\Gr^V_{-1}\Mmod$$
between the vanishing and nearby cycles along $X$, induced by the action of $t$ on $V_0 \Mmod$; see \cite[2.24]{Saito-MHM}. 

The corresponding global assertion is that 
$${\mathcal H}^0i^!\Mmod\simeq {\rm ker}(\sigma)\quad\text{and}\quad {\mathcal H}^1i^!\Mmod\simeq {\rm coker}(\sigma)$$
for a canonical morphism 
\begin{equation}\label{eq_Var}
\Gr^V_0\Mmod\overset{\sigma}\longrightarrow\Gr^V_{-1}\Mmod\otimes_{\shO_Y}\shO_Y(X),
\end{equation}
with the filtrations induced by the filtrations on $\Mmod$. 
This goes via the specialization functor as in \cite[\S2.2, 2.3]{Saito-MHM}. Alternatively, in this case we can perform a direct 
calculation, which was shown to us by C. Schnell; see also \cite[\S30]{Schnell-MHM} for a related discussion.
Indeed, a local equation $t$ on an open subset $U\subseteq Y$ induces an isomorphism 
$\shO_Y (X)\vert_U\simeq\shO_U$
and via this isomorphism $\sigma$ is given by multiplication by $t$ as above. Consider now another open set $U'$, on which 
$\shO_Y (X)$ is trivialized by another local equation $t'$, and let $g$ be the corresponding transition function of $\shO_Y(X)$ 
on $U \cap U'$, so that $t' = g t$. We thus have a commutative diagram 
$$
\begin{tikzcd}
\Gr^V_0\Mmod \rar{t} \dar{{\rm Id}} & \Gr^V_{-1}\Mmod \dar{g} \\
\Gr^V_0\Mmod \rar{t'}  & \Gr^V_{-1}\Mmod
\end{tikzcd}
$$
which means that the local models glue to give the morphism in ($\ref{eq_Var}$).\footnote{As Schnell points out, it is interesting to note that the cohomologies ${\mathcal H}^0i^!\Mmod$ and ${\mathcal H}^1i^!\Mmod$ have well-defined $\Dmod_X$-module structure, as it can also be checked by a calculation in  local coordinates, while in general $\gr_{\alpha}^V \Mmod$ does not, since the embedding of $\Dmod_X$ into 
$\gr_0^V \Dmod_Y$ depends on the choice of local equation.}

For the examples below, and the following sections, recall that if $X$ is a smooth complex variety of dimension $n$, and $D$ is a reduced 
effective divisor on $X$, then $\big( \omega_X (*D), F \big)$ is the filtered right $\Dmod$-module underlying the mixed Hodge module 
$j_* \QQ_U^H [n]$, where $j \colon U = X\smallsetminus D \hookrightarrow X$ is the inclusion of the complement of $D$, and 
$\QQ_U^H [n]$ is the trivial mixed Hodge module on $U$.

\begin{example}\label{example1}
Let $Y$ be a smooth variety and $E=\sum_{i=1}^rE_i$ a simple normal crossing divisor on $Y$. If $i\colon E_1\hookrightarrow Y$ is the inclusion, then
$$i^!\omega_Y(*E) =  0.$$
 Indeed, it is easy to see using the definition and uniqueness of the $V$-filtration along $E_1$ that 
$$V_{\alpha}\omega_Y(*E)= \omega_Y (*E') \otimes_{\shO_Y}\shO_Y \big(([\alpha] + 1) E_1\big),$$ 
where $E' = \sum_{i=2}^rE_i$.
The morphism $\sigma$ in (\ref{eq_Var}) is an isomorphism, which gives our assertion.
\end{example}

\begin{example}\label{example2}
Let $Y$ and $E$ be as in the previous example. If $E'=\sum_{i=2}^rE_i$, then
$${\mathcal H}^0i^!\omega_Y(*E') = 0\quad\text{and}\quad \big({\mathcal H}^1i^!\omega_Y(*E'),F_{\bullet}\big)\simeq 
\big(\omega_{E_1}(*E'\vert_{E_1}), F_{\bullet+1}\big).$$
Indeed, it is again easy to see  using the definition and uniqueness of the $V$-filtration along $E_1$ that 
$$V_{\alpha}\omega_Y(*E')=\omega_Y(*E')\otimes_{\shO_Y}\shO_Y\big(([\alpha]+1)E_1\big)\,\,\text{for}\,\,\alpha\leq -1,\,\,\text{and}$$
$$V_{\alpha}\omega_Y(*E')=\omega_Y(*E')\,\,\text{for}\,\,\alpha\geq -1$$
(see also Lemma \ref{noncharacteristic}). Therefore
$$\Gr^V_0\omega_Y(*E') =  0\quad\text{and}\quad \Gr^V_{-1}\omega_Y(*E')\otimes_{\shO_Y}\shO_Y(E_1)\simeq\omega_{E_1}(*E'\vert_{E_1}),$$
and the fact that via this isomorphism
$$F_{k-n}\Gr^V_{-1}\omega_Y(*E')\otimes_{\shO_Y}\shO_Y(E_1)\simeq F_{k-n+1}\omega_{E_1}(*E'\vert_{E_1})$$
follows, for example, from the explicit description of the filtrations on $\omega_Y(*E')$ and $\omega_{E_1}(*E'\vert_{E_1})$ in \cite[Proposition~8.2]{MP}.
\end{example}

\begin{remark}
These examples will be used in the proof of Theorem \ref{restriction}. While the use of the $V$-filtration is in the spirit of what follows below (where it is crucially needed for analyzing the $F$-filtration), it is worth noting that the statements hold for basic topological reasons via the Riemann-Hilbert correspondence. For instance, the vanishing in Example \ref{example1} follows by dualizing the obvious identity $i^{-1} j_! \QQ_{Y \smallsetminus E} = 0$. More generally, all of the facts in this section hold at the topological level, using the sheaf-theoretic unipotent 
vanishing and nearby cycles.
\end{remark}

\section{Main results}

\subsection{Restriction theorem}
We use the results of the previous sections to give a proof of the main statement in the paper. In the proof we will use the fact (see for instance \cite[\S6]{MP})
that given a reduced divisor $D$ and a log resolution $f\colon Y \to X$ of the pair $(X, D)$, assumed to be an isomorphism over 
$X\smallsetminus D$, and denoting $E = (f^*D)_{{\rm red}}$, we have a filtered isomorphism
$$f_+ \big( \omega_Y(*E), F \big) \simeq  \big( \omega_X(*D), F \big).$$
Here $f_+ \colon {\bf D}^b \big({\rm FM}(\Dmod_Y)\big) \rightarrow {\bf D}^b \big({\rm FM}(\Dmod_X)\big)$ is the direct image functor 
on the derived category of filtered right $\Dmod_Y$-modules constructed in \cite[\S2.3]{Saito-MHP}.
The $k$-th Hodge ideal $I_k (D)$ satisfies the formula 
$$F_k \omega_X(*D) = \omega_X \big((k+1)D\big) \otimes I_k (D).$$

\begin{proof}[Proof of Theorem \ref{restriction}]
We consider a log resolution $f \colon Y \to X$ of the pair $(X, D+ H)$. Since $H$ is smooth, it follows that $D+H$ has simple normal crossings on $X\smallsetminus D$,
hence we may and will assume that $f$ is an isomorphism over $X\smallsetminus D$.
We put $E = (f^*D)_{\rm red}$. Note that if $\widetilde{H}$ is the strict transform of $H$, then by construction
the divisor $\widetilde{H}+E$ is reduced, with simple normal crossings. Therefore the induced morphism $h\colon \widetilde{H}\to H$ is a log resolution
of $(H,D'_H)$ which is an isomorphism over $H\smallsetminus D'_H$. Moreover,
we have $E\vert_{\widetilde{H}}=(h^*D'_H)_{\rm red}$.

Consider the Cartesian diagram
$$
\begin{tikzcd}
f^{-1}(H) \rar{j} \dar{g} & Y \dar{f} \\
H \rar{i} & X
\end{tikzcd}
$$
where $f^{-1}(H)$ is considered with its reduced structure, with $i$ and $j$ the obvious inclusions. The base change theorem in \cite[4.4.3]{Saito-MHM} says that
\begin{equation}\label{eq_base_change}
i^! f_+ \omega_Y (*E) \simeq g_+ j^ !  \omega_Y(*E).
\end{equation}
Consider also the closed immersion $\alpha\colon \widetilde{H}\hookrightarrow f^{-1}(H)$.

We first show that we have a canonical isomorphism 
\begin{equation}\label{eq_first_isom}
j^!\omega_Y(*E)\simeq \big(\alpha_*\omega_{\widetilde{H}}(*E\vert_{\widetilde{H}}),F_{\bullet+1}\big)[-1].\footnote{Added in revision: Morihiko Saito has 
recently pointed out to us that a substantially quicker proof of this fact can be given by directly applying the base change formula above with $f$ and $g$  replaced 
by  the inclusions of $U$ and $U\cap H$ into $X$ and $H$ respectively; see \cite{Saito-MLCT}.}
\end{equation}
In order to see this, note that if $G$ is an irreducible component of $f^{-1}(H)$ different from $\widetilde{H}$, and $\alpha_G\colon G\hookrightarrow f^{-1}(H)$
is the inclusion, then $G$ is an irreducible component of $E$, hence
$$\alpha_G^!j^!\omega_Y(*E)\simeq (j\circ\alpha_G)^!\omega_Y(*E) = 0$$
by Example~\ref{example1}.
We thus deduce from Lemma~\ref{restriction_to_components} that the canonical morphism
$$\alpha_+\alpha^!j^!\omega_Y(*E)\to j^!\omega_Y(*E)$$
is an isomorphism. Since
$$\alpha_+\alpha^!j^!\omega_Y(*E)\simeq\alpha_+(j\circ\alpha)^!\omega_Y(*E)\simeq\alpha_+\big(\omega_{\widetilde{H}}(*E\vert_{\widetilde{H}}),F_{\bullet+1}\big)[-1]$$
by Example~\ref{example2}, we obtain the isomorphism in (\ref{eq_first_isom}).  We conclude that the right-hand side of (\ref{eq_base_change}) is isomorphic to
$$g_+\alpha_+\big(\omega_{\widetilde{H}}(*E\vert_{\widetilde{H}}),F_{\bullet+1}\big)[-1]\simeq h_+\big(\omega_{\widetilde{H}}(*E\vert_{\widetilde{H}}),F_{\bullet+1}\big)[-1]\simeq
\big(\omega_H(*D'_H),F_{\bullet+1}\big)[-1].$$

We now turn to the left-hand side of (\ref{eq_base_change}) and note as mentioned above that we have a canonical isomorphism
$$ f_+ \omega_Y (*E)\simeq\omega_X(*D).$$
We show that for every $k\geq 0$ there is a canonical morphism 
\begin{equation}\label{eq_morphism}
F_k{\mathcal H}^1i^!\omega_X(*D)\longrightarrow F_{k}\omega_X(*D)\otimes_{\shO_X}\shO_H(H).
\end{equation}

Recall that, according to the discussion in \S\ref{shriek}, if we consider on $\Mmod=\omega_X(*D)$ the $V$-filtration with respect 
to the smooth hypersurface $H$, then we have a morphism
$$
\sigma\colon\Gr^V_0\Mmod\longrightarrow\Gr_{-1}^V\Mmod\otimes_{\shO_X}\shO_X(H)
$$
such that
$${\mathcal H}^1i^!\Mmod \simeq {\rm coker}(\sigma),$$
with 
$F_k{\mathcal H}^1i^!\Mmod$ being isomorphic to the image of 
$$ F_k \gr_{-1}^V \Mmod \otimes_{\shO_X}\shO_X(H)$$
in ${\rm coker}(\sigma)$.
We define a morphism
$$\eta\colon F_k \gr_{-1}^V \Mmod = \frac{F_k  V_{-1}\Mmod}{F_k  V_{<-1}\Mmod}\longrightarrow F_k\Mmod\otimes_{\shO_X}\shO_H$$
by mapping the class of an element $u\in F_k\Mmod\cap V_{-1}\Mmod$ to the class of $u$ in $F_k\Mmod\otimes_{\shO_X}\shO_H$. 
To check that this is well defined, we may restrict to an open subset where $H$ is defined by an equation $t$. Since $\Mmod$ underlies a mixed Hodge module, the first identity in Definition \ref{quasi-unipotent} says that 
$$(F_k V_{\alpha} \Mmod)\cdot t = F_k V_{\alpha - 1} \Mmod\quad\text{for}\quad\alpha < 0.$$
It follows that 
$F_k V_{<-1}\Mmod$ is contained in  $F_k\Mmod\cdot t$, which maps to $0$ in $F_k\Mmod\otimes_{\shO_X}\shO_H$. 
In order to obtain  the canonical morphism (\ref{eq_morphism}), we also need to check that after tensoring with $\shO_X(H)$,
the morphism $\eta$ is $0$ on ${\rm Im}(\sigma)$.
But since $\sigma$ is given by right multiplication by $t$, it is clear that the image of $\sigma$ is mapped to $0$ by
 $\eta\otimes_{\shO_X}\shO_X(H)$.

By putting everything together, we obtain for every $k$ a canonical morphism
$$
F_{k+1}\omega_H(*D'_H)\simeq F_k{\mathcal H}^1g_+ j^!\omega_Y(*E)\simeq F_k{\mathcal H}^1i^!\omega_X(*D)\longrightarrow F_k\omega_X(*D)\otimes_{\shO_X}\shO_H(H).
$$
Applying this with $k$ replaced by $k-n$, where $n=\dim X$, and using the definition of Hodge ideals, we obtain
\begin{align*}
I_k(D'_H)\otimes_{\shO_H}\omega_H\big((k+1)D'_H\big) \to & I_k(D)\otimes_{\shO_X}\omega_X\big((k+1)D\big)\otimes_{\shO_X}\shO_H(H)\\
&\simeq
I_k(D)\otimes_{\shO_X}\omega_H\big((k+1)D_H\big),
\end{align*}
which defines a morphism 
$$
\shO_X\big(-(k+1)(D_H-D'_H)\big)\otimes I_k(D'_H)\longrightarrow I_k(D)\otimes_{\shO_X}\shO_H.
$$
By composing this with the canonical morphism $I_k(D)\otimes_{\shO_X}\shO_H\to I_k(D)\cdot\shO_H$, we obtain a canonical
morphism 
$$
\varphi\colon \shO_X\big(-(k+1)(D_H-D'_H)\big)\cdot  I_k(D'_H)\longrightarrow I_k(D)\cdot\shO_H.
$$

Finally, note that the construction of $\varphi$ is compatible with restriction to open subsets. If we restrict to $U=X\smallsetminus D$,
then both the source and the target of $\varphi\vert_U$ are equal to $\shO_U$ and $\varphi_U$ is the identity. This implies that $\varphi$
commutes with the inclusion of the source and the target in the field of rational functions, that is, $\varphi$ is an inclusion map.
This gives the inclusion in (\ref{eq1_restriction}).

In order to prove the last assertion in the theorem, it is enough to show that if $H$ is non-characteristic with respect to $\omega_X(*D)$, then the canonical morphism
(\ref{eq_morphism}) is surjective (note that since $H$ is general, $D_H$ is reduced). 
 This can be checked locally, hence we may assume that we have a local equation $t$ for $H$. In this case, it follows from
Lemma~\ref{noncharacteristic} that
$$\gr^V_0\omega_X(*D)=0\quad\text{and}\quad \gr_{-1}^0\omega_X(*D)=\omega_X(*D)\otimes_{\shO_X}\shO_H.$$
It is then clear that the morphism (\ref{eq_morphism}) is an isomorphism, which completes the proof of the theorem.
\end{proof}

\begin{corollary}\label{higher_codimension}
Let $X$ be a smooth variety and $D$ a reduced effective divisor on $X$. If $Z$ is a smooth closed subvariety of $X$ such that $Z\not\subseteq {\rm Supp}(D)$
and $D\vert_Z$ is reduced, then for every $k\geq 0$ we have
$$I_k(D\vert_Z)\subseteq I_k(D)\cdot\shO_Z.$$
\end{corollary}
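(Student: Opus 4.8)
The statement is a restriction theorem for higher-codimension smooth subvarieties, and the natural approach is to deduce it from Theorem~\ref{restriction} by writing $Z$ as an intersection of smooth divisors and applying the theorem one divisor at a time. Concretely, since the statement is local on $X$, I may assume there exist smooth hypersurfaces $H_1,\dots,H_c$ on $X$ (with $c=\codim_X Z$) meeting transversally along $Z$, so that $Z=H_1\cap\cdots\cap H_c$ and each partial intersection $Z_j:=H_1\cap\cdots\cap H_j$ is smooth. Setting $Z_0=X$, I would run an induction on $j$, at each stage restricting the divisor $D|_{Z_{j-1}}$ on the smooth variety $Z_{j-1}$ to its smooth divisor $H_j\cap Z_{j-1}$.

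**Key steps.** First I would record that one can choose such $H_1,\dots,H_c$ locally: this is standard, e.g.\ take local coordinates in which $Z=\{x_1=\cdots=x_c=0\}$ and set $H_i=\{x_i=0\}$. Second, I need to verify the hypotheses of Theorem~\ref{restriction} are propagated along the induction. Writing $D_0=D$ and $D_j=(D_{j-1})|_{Z_j}$, the assumption that $D|_Z$ is reduced together with $Z\not\subseteq\Supp(D)$ must be leveraged to guarantee that $D_{j-1}|_{H_j\cap Z_{j-1}}$ is reduced for each $j$ — this is the point that needs a little care, since reducedness of $D|_Z$ does not automatically imply reducedness of the intermediate restrictions for a badly chosen flag. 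The clean fix: choose the $H_i$ generically among the hyperplane coordinates (a general linear combination argument as used for Corollary~\ref{singular_locus_cone}), so that at each stage $H_j\cap Z_{j-1}$ is sufficiently general relative to $D_{j-1}$, hence $D_j$ is reduced and $Z_j\not\subseteq\Supp(D_{j-1})$; generically one even gets equality in the restriction. Third, with reducedness in hand, Theorem~\ref{restriction}, inclusion~\eqref{eq2_restriction}, applied on $Z_{j-1}$ with the smooth divisor $H_j\cap Z_{j-1}$ gives
$$I_k(D_j)\subseteq I_k(D_{j-1})\cdot\shO_{Z_j}\subseteq \big(I_k(D_{j-2})\cdot\shO_{Z_{j-1}}\big)\cdot\shO_{Z_j}=I_k(D_{j-2})\cdot\shO_{Z_j},$$
and iterating down to $j=0$ yields $I_k(D|_Z)=I_k(D_c)\subseteq I_k(D)\cdot\shO_Z$, which is the claim.

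**Main obstacle.** The only genuine subtlety is the reducedness bookkeeping: I must ensure that the flag $X=Z_0\supset Z_1\supset\cdots\supset Z_c=Z$ can be chosen so that $D_{j-1}|_{Z_j}$ is reduced at every step, not just at the final step. I expect this to be handled by a genericity argument — choosing the $H_i$ as general hyperplanes in a local embedding into affine space — combined with the observation that a hypersurface general in a basepoint-free linear system restricts to a reduced divisor whenever the ambient divisor is reduced (which is implicit in the last assertion of Theorem~\ref{restriction}). One should also double-check the compatibility of the iterated restrictions $\shO_{Z_{j-1}}\otimes(-)\otimes\shO_{Z_j}$ with the single restriction $\shO_X\otimes(-)\otimes\shO_Z$, but this is routine since $Z\hookrightarrow Z_{j-1}\hookrightarrow X$ is a factorization of closed immersions of smooth varieties. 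Everything else is a direct unwinding of Theorem~\ref{restriction}.
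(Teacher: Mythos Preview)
Your overall strategy matches the paper's exactly: localize, write $Z$ as a transverse intersection $H_1\cap\cdots\cap H_c$ of smooth hypersurfaces, set $Z_j=H_1\cap\cdots\cap H_j$, and apply Theorem~\ref{restriction} to each inclusion $Z_j\hookrightarrow Z_{j-1}$.

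Where you go wrong is in the ``reducedness bookkeeping''. You assert that reducedness of $D|_Z$ does not automatically imply reducedness of the intermediate $D|_{Z_j}$, and propose to repair this by choosing the $H_i$ generically. In fact the reducedness \emph{is} automatic at every point of $Z$, for any choice of flag. The relevant commutative algebra is elementary: if $R$ is a regular local ring, $t$ a regular parameter, and $f\in R$ has nonzero squarefree image in $R/(t)$, then $f$ is already squarefree in $R$ (a repeated prime factor $g^2\mid f$ would give $\bar g^2\mid \bar f$ with $\bar g$ a nonzero non-unit, since $g\in\mathfrak m$ and $\bar f\neq 0$). Applying this inductively from $Z_c=Z$ up to $Z_0=X$ at a point $z\in Z$ (nonvanishing holds since $Z\not\subseteq\Supp(D)$), one sees that each $D|_{Z_j}$ is reduced at $z$, hence on a neighborhood. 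This is precisely what the paper's phrase ``after possibly replacing $X$ by a suitable neighborhood \ldots\ we may assume that each $Z_i$ is smooth and $D|_{Z_i}$ is reduced'' is encoding.

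Your proposed genericity fix, by contrast, does not work as written. The $H_i$ are constrained to contain $Z$, so they move in a linear system with base locus $Z$, which is not basepoint-free; hence the last assertion of Theorem~\ref{restriction} (which you invoke) does not apply, and in particular gives no information about reducedness of $D_{j-1}|_{Z_j}$ at points of $Z$ --- exactly where it is needed. Even if one could push a genericity argument through away from $Z$, closing it along $Z$ would still require the observation above. So drop the genericity and argue directly that the intermediate restrictions are reduced near $Z$; the rest of your induction is fine.
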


\begin{proof}
The assertion is local on $X$, hence after taking a suitable open cover of $X$ we may assume that $Z=H_1\cap\cdots\cap H_r$, the transverse intersection of $r$ smooth divisors, where $r={\rm codim}(Z,X)$. 
Note that after possibly replacing $X$ by a suitable neighborhood of $H$, we may assume that each $Z_i:=H_1\cap\cdots\cap H_i$ is smooth and $D\vert_{Z_i}$ is reduced for every $i$.
The inclusion in the corollary then follows by applying Theorem~\ref{restriction} to deduce that
$$I_k(D\vert_{Z_i})\subseteq I_k(D\vert_{Z_{i-1}})\cdot\shO_{Z_i},$$
for $1\leq i\leq r$ (where $Z_0=X$).
\end{proof}

\begin{remark}\label{rem_restriction_general_hypersurfaces}
Another proof of the second statement in Theorem \ref{restriction} is given in \cite[Theorem 16.1]{MP}. In that proof, the generality assumption on $H$ is explicitly used only to ensure that 
$H$ is smooth, $D\not\subseteq {\rm Supp}(H)$ and $D\vert_H$ is reduced, and given a log resolution $f\colon Y\to X$ of $(X,D)$, it is also a log resolution of $(X,D+H)$ such that $f^*H$ is the strict transform of $H$. 
These conditions also hold if we work simultaneously with several general divisors, which leads to the following more general version. Suppose that 
$D$ is a reduced effective divisor on the smooth $n$-dimensional variety $X$. For every $k\geq 0$, if $H_1,\ldots,H_r$ are general
elements of base-point free linear systems $V_1,\ldots,V_r$ on $X$, and if $Y=H_1\cap\cdots\cap H_r$, then
$$I_k(D\vert_Y)=I_k(D)\cdot\shO_Y.$$
\end{remark}

\subsection{Subadditivity theorem}
This section contains the proof of Theorem~\ref{subadditivity}.  We proceed as in the case of multiplier ideals, by reducing the statement 
to the following proposition, via the Restriction Theorem.

\begin{proposition}\label{product_case}
Let $X_1$ and $X_2$ be smooth varieties and let $D_i$ be a reduced effective divisor on $X_i$, for $i=1,2$. If $B_i={\rm pr}_i^* D_i$,
where ${\rm pr_i}\colon X_1\times X_2\to X_i$ are the canonical projections, then for every $k\geq 0$ we have
$$I_k(B_1+B_2)=\sum_{i+j=k}\big(I_i(D_1)\shO_{X_1}(-jD_1)\cdot\shO_{X_1\times X_2}\big)\cdot \big(I_j(D_2)\shO_{X_2}(-iD_2)\cdot\shO_{X_1\times X_2}\big).$$
\end{proposition}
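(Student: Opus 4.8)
The plan is to realize $\big(\omega_{X_1\times X_2}(*(B_1+B_2)),F\big)$ as the external product of the filtered $\mathscr{D}$-modules $\big(\omega_{X_i}(*D_i),F\big)$, and then to read off the formula for $I_k(B_1+B_2)$ by tracking line bundles and ideal sheaves through the convolution of the two Hodge filtrations. This is the exact analogue of the way one handles the product case for multiplier ideals, except that here the Hodge filtration really must be produced via mixed Hodge module theory rather than via a log resolution.

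First I would record that, writing $U_i=X_i\smallsetminus D_i$ and $n_i=\dim X_i$, the complement of $B_1+B_2$ in $X_1\times X_2$ is precisely $U_1\times U_2$, and that $B_1+B_2$ is reduced (each $B_i$ is reduced since the projection $\mathrm{pr}_i$ is smooth, and $B_1,B_2$ have no common component). Hence $\omega_{X_1\times X_2}(*(B_1+B_2))$, with its Hodge filtration, underlies the mixed Hodge module $j_*\QQ^H_{U_1\times U_2}[n_1+n_2]$, where $j=j_1\times j_2$ and $j_i\colon U_i\hookrightarrow X_i$. Since $\QQ^H_{U_1\times U_2}[n_1+n_2]=\QQ^H_{U_1}[n_1]\boxtimes\QQ^H_{U_2}[n_2]$ and external products are compatible with direct images in Saito's theory \cite{Saito-MHM}, one gets $j_*\QQ^H_{U_1\times U_2}[n_1+n_2]\simeq (j_1)_*\QQ^H_{U_1}[n_1]\boxtimes(j_2)_*\QQ^H_{U_2}[n_2]$. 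Passing to underlying filtered right $\mathscr{D}$-modules yields a filtered isomorphism
$$\big(\omega_{X_1\times X_2}(*(B_1+B_2)),F\big)\;\simeq\;\big(\omega_{X_1}(*D_1),F\big)\boxtimes\big(\omega_{X_2}(*D_2),F\big),$$
where the external product of filtered $\mathscr{D}$-modules has underlying sheaf $\mathrm{pr}_1^*\omega_{X_1}(*D_1)\otimes_{\shO_{X_1\times X_2}}\mathrm{pr}_2^*\omega_{X_2}(*D_2)$ and Hodge filtration the convolution $F_p=\sum_{a+b=p}\mathrm{pr}_1^*(F_a\omega_{X_1}(*D_1))\otimes\mathrm{pr}_2^*(F_b\omega_{X_2}(*D_2))$ (no shift: the $[n_i]$ in the two normalizations conspire with $\boxtimes$ to produce the $[n_1+n_2]$ normalization of the product).

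Then I would feed in the defining property of the Hodge ideals. Accounting for the shift $[n_i]$, the factor satisfies $F_{m-n_i}\omega_{X_i}(*D_i)=\omega_{X_i}\big((m+1)D_i\big)\otimes I_m(D_i)$ for $m\ge 0$ and vanishes below; likewise $F_{k-n_1-n_2}\omega_{X_1\times X_2}(*(B_1+B_2))=\omega_{X_1\times X_2}\big((k+1)(B_1+B_2)\big)\otimes I_k(B_1+B_2)$. Evaluating the convolution at $p=k-n_1-n_2$ and putting $a=i-n_1$, $b=j-n_2$ (so $a+b=p$ becomes $i+j=k$) gives
$$I_k(B_1+B_2)\otimes\omega_{X_1\times X_2}\big((k+1)(B_1+B_2)\big)=\sum_{i+j=k}\mathrm{pr}_1^*\big(\omega_{X_1}((i+1)D_1)\otimes I_i(D_1)\big)\otimes\mathrm{pr}_2^*\big(\omega_{X_2}((j+1)D_2)\otimes I_j(D_2)\big).$$
Since $\omega_{X_1\times X_2}\big((k+1)(B_1+B_2)\big)\simeq\mathrm{pr}_1^*\omega_{X_1}((k+1)D_1)\otimes\mathrm{pr}_2^*\omega_{X_2}((k+1)D_2)$, tensoring by its inverse and using $i-k=-j$, $j-k=-i$ turns the $(i,j)$-summand into $\mathrm{pr}_1^*\big(I_i(D_1)\otimes\shO_{X_1}(-jD_1)\big)\otimes\mathrm{pr}_2^*\big(I_j(D_2)\otimes\shO_{X_2}(-iD_2)\big)$; since $I_i(D_1)$ is torsion free and $X_1\times X_2\to X_i$ is flat, this is exactly the product ideal $\big(I_i(D_1)\shO_{X_1}(-jD_1)\cdot\shO_{X_1\times X_2}\big)\cdot\big(I_j(D_2)\shO_{X_2}(-iD_2)\cdot\shO_{X_1\times X_2}\big)$ in $\shO_{X_1\times X_2}$, and summing over $i+j=k$ gives the proposition.

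The reducedness of $B_1+B_2$ and the ideal-sheaf identifications are routine consequences of smoothness/flatness over the base field. The one point that demands care — the main obstacle — is the middle step: extracting from \cite{Saito-MHM}, with the correct normalization (perverse shift $[n_i]$, no Tate twist), both that the open push-forward of an external product of trivial Hodge modules is the external product of the push-forwards, and that the underlying filtered $\mathscr{D}$-module of an external product of mixed Hodge modules carries the convolution of the two Hodge filtrations. A stray shift in either statement would dislocate the indexing in the final sum, so this bookkeeping must be carried out explicitly.
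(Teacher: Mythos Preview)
Your proposal is correct and follows essentially the same route as the paper: both arguments establish the external product isomorphism $j_*\QQ^H_{U_1\times U_2}[n_1+n_2]\simeq (j_1)_*\QQ^H_{U_1}[n_1]\boxtimes(j_2)_*\QQ^H_{U_2}[n_2]$ of mixed Hodge modules (the paper cites \cite[3.8.5]{Saito-MHM} for this), pass to the convolution filtration on the underlying filtered $\mathscr{D}$-modules, and then unwind the line-bundle bookkeeping exactly as you do. Your write-up is in fact more explicit about the index shifts and the flatness/torsion-free step needed to identify the pulled-back ideals with their images in $\shO_{X_1\times X_2}$.
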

\begin{proof}
Let $n_i$ be the dimension of $X_i$.
Let $U_i=X_i\smallsetminus D_i$ and $\alpha_i\colon U_i\hookrightarrow X_i$ the inclusion map,
so that $\big(\omega_{X_i}(*D_i),F \big)$ underlies the mixed Hodge module ${\alpha_i}_*\QQ_{U_i}^H [n_i]$. 
Note that if $U=U_1\times U_2$ and $\alpha\colon U\hookrightarrow X$ is the inclusion, then 
$U=X\smallsetminus (B_1+B_2)$, where $X=X_1\times X_2$.
We have a canonical isomorphism
\begin{equation}\label{external}
\alpha_*\QQ_U^{H}[n_1 + n_2]\simeq {\alpha_1}_*\QQ_{U_1}^H[n_1]\boxtimes  {\alpha_2}_*\QQ_{U_2}^H[n_2].
\end{equation}
Indeed,
$$\alpha_*\QQ_U^{H}[n_1+n_2] \simeq \alpha_* \alpha^{-1} \big( \QQ_{X_1}^H[n_1]\boxtimes  \QQ_{X_2}^H[n_2]\big) $$ 
$$\simeq (\alpha_1 \times \alpha_2)_* (\alpha_1\times \alpha_2)^{-1} \big( \QQ_{X_1}^H[n_1]\boxtimes  \QQ_{X_2}^H[n_2]\big) $$
$$\simeq ({\alpha_1}_*\alpha_1^{-1} \times {\rm id}) \circ ({\rm id} \times {\alpha_2}_*\alpha_2^{-1})  \big( \QQ_{X_1}^H[n_1]\boxtimes  \QQ_{X_2}^H[n_2]\big) $$
$$\simeq ({\alpha_1}_*\alpha_1^{-1} \times {\rm id}) \big( \QQ_{X_1}^H[n_1]\boxtimes  {\alpha_2}_* \alpha_2^{-1}\QQ_{X_2}^H[n_2]\big) $$  
$$\simeq {\alpha_1}_* \alpha_1^{-1}\QQ_{X_1}^H[n_1]\boxtimes  {\alpha_2}_* \alpha_2^{-1}\QQ_{X_2}^H[n_2],$$
where the last two isomorphisms follow from \cite[3.8.5]{Saito-MHM}.

The isomorphism in ($\ref{external}$) implies that we have a canonical isomorphism of filtered right $\Dmod_X$-modules
$$\big( \omega_X\big(*(B_1+B_2)\big), F\big)\simeq \big(\omega_{X_1}(*D_1)\boxtimes\omega_{X_2}(*D_2), F\big),$$
where the filtration on the right hand side is the exterior product of the filtrations on the two factors.
Translating this in terms of Hodge ideals, we obtain
$$I_k(B_1+B_2)\cdot\omega_X\big((k+1)(B_1+B_2)\big) = $$
$$=\sum_{i+j=k}\big(I_i(D_1)\cdot \omega_{X_1}((i+1)D_1)\big)\boxtimes\big(I_j(D_2)\cdot \omega_{X_2}((j+1)D_2)\big)$$
and using the fact that 
$\omega_X=\omega_{X_1}\boxtimes\omega_{X_2}$ we deduce the equality in the proposition.
\end{proof}

\begin{proof}[Proof of Theorem~\ref{subadditivity}]
Consider the diagonal embedding $\Delta\colon X\hookrightarrow X\times X$. If we take $B_i={\rm pr}_i^* D_i$ for $i=1,2$, then
$(B_1+B_2)\vert_X=D_1+D_2$, hence
Corollary~\ref{higher_codimension} gives
$$I_k(D_1+D_2)\subseteq I_k(B_1+B_2)\cdot \shO_X.$$
The first inclusion in the theorem then follows from the formula for $I_k(B_1+B_2)$ in Proposition~\ref{product_case}.
The second inclusion is a consequence of the fact that for every $i$ and $j$, we have
$$\shO_X(-jD_1)\cdot I_i(D_1)\subseteq I_{i+j}(D_1)\quad\text{and}\quad\shO_X(-iD_2)\cdot I_j(D_2)\subseteq I_{i+j}(D_2).$$
This in turn is a consequence of the inclusion
$$F_{i-n}\omega_X(*D)\subseteq F_{i+j-n}\omega_X(*D)$$
for any reduced effective divisor $D$ on $X$, and the definition of Hodge ideals.
\end{proof}

\begin{proposition}\label{behavior_sum}
If $D_1$ and $D_2$ are effective divisors on the smooth variety $X$, such that $D_1+D_2$ is reduced, then for every $k\geq 0$ 
we have
$$\shO_X\big(-(k+1)D_2\big)\cdot I_k(D_1)\subseteq I_k(D_1+D_2)\subseteq I_k(D_1).$$
\end{proposition}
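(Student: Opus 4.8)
The two inclusions are essentially independent, so the plan is to handle them separately. Note first that, since $D_1+D_2$ is a reduced effective divisor, its subdivisor $D_1$ is also reduced, so that $I_k(D_1)$ is defined.

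The inclusion $I_k(D_1+D_2)\subseteq I_k(D_1)$ is immediate from Theorem~\ref{subadditivity}. Indeed, since $D_1+D_2$ is reduced, that theorem applies to the pair $(D_1,D_2)$ and gives
$$I_k(D_1+D_2)\subseteq I_k(D_1)\cdot I_k(D_2)\subseteq I_k(D_1),$$
the last step because $I_k(D_2)\subseteq\shO_X$.

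For the inclusion $\shO_X\big(-(k+1)D_2\big)\cdot I_k(D_1)\subseteq I_k(D_1+D_2)$, I would argue directly from the definition of the Hodge ideals by comparing the two filtered $\Dmod_X$-modules $\omega_X(*D_1)$ and $\omega_X\big(*(D_1+D_2)\big)$. Writing $U_1=X\smallsetminus D_1$ and $U=X\smallsetminus (D_1+D_2)$, with inclusions $j_1\colon U_1\hookrightarrow X$, $j\colon U\hookrightarrow X$ and $\iota\colon U\hookrightarrow U_1$, the adjunction morphism $\QQ_{U_1}^H\to\iota_*\QQ_U^H$, pushed forward along $j_1$, gives a morphism of mixed Hodge modules
$$j_{1*}\QQ_{U_1}^H[n]\longrightarrow j_*\QQ_U^H[n]$$
whose underlying morphism of $\Dmod_X$-modules is the tautological inclusion $\omega_X(*D_1)\hookrightarrow\omega_X\big(*(D_1+D_2)\big)$ of sheaves of meromorphic top forms. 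Since a morphism of mixed Hodge modules is in particular a filtered morphism of the underlying filtered $\Dmod$-modules, we obtain $F_k\omega_X(*D_1)\subseteq F_k\omega_X\big(*(D_1+D_2)\big)$ as subsheaves of $\omega_X\big(*(D_1+D_2)\big)$, for every $k\geq 0$.

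It then remains to unwind this using $F_k\omega_X(*D)=\omega_X\big((k+1)D\big)\otimes I_k(D)$. The displayed inclusion becomes
$$\omega_X\big((k+1)D_1\big)\otimes I_k(D_1)\subseteq\omega_X\big((k+1)(D_1+D_2)\big)\otimes I_k(D_1+D_2)$$
inside $\omega_X\big(*(D_1+D_2)\big)$; tensoring with the line bundle $\omega_X^{-1}\big(-(k+1)(D_1+D_2)\big)$, which is flat and hence preserves inclusions of subsheaves, identifies the left-hand side with $\shO_X\big(-(k+1)D_2\big)\otimes I_k(D_1)$ and the right-hand side with $I_k(D_1+D_2)$, both now sitting inside $\shO_X$ — and the image of $\shO_X\big(-(k+1)D_2\big)\otimes I_k(D_1)$ in $\shO_X$ is the product ideal, since $\shO_X\big(-(k+1)D_2\big)$ is invertible. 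This yields the first inclusion. The only genuine point in the argument is the identification of the inclusion of meromorphic forms with a morphism of mixed Hodge modules, so that its compatibility with the Hodge filtration may be invoked; everything else is a formal manipulation of line-bundle twists.
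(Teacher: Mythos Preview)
Your proof is correct and follows essentially the same approach as the paper's: the second inclusion is deduced from Theorem~\ref{subadditivity}, and the first from the fact that the natural inclusion $\omega_X(*D_1)\hookrightarrow\omega_X\big(*(D_1+D_2)\big)$ underlies a morphism of mixed Hodge modules (obtained by pushing forward the adjunction map along the open inclusion of $X\smallsetminus D_1$), hence respects the Hodge filtration. Your unwinding of the line-bundle twists is slightly more explicit than the paper's, but the argument is the same.
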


\begin{proof}
Let $U=X\smallsetminus (D_1+D_2)$, $V=X\smallsetminus D_1$, and $\gamma=\beta\circ\alpha$, where $\alpha\colon U\hookrightarrow V$ and $\beta\colon V\hookrightarrow X$ are the inclusion maps. 
If $n=\dim X $, it follows from the definition that $\big(\omega_X(*(D_1+D_2)), F\big)$ underlies the mixed Hodge module
$$\gamma_*\QQ_U^{H} [n]=\beta_*\alpha_*\QQ_U^{H}[n].$$
On the other hand, on $V$ we have a morphism of mixed Hodge modules 
$$\QQ_V^{H} [n]\longrightarrow \alpha_*\alpha^{-1}\QQ_V^{H}[n]=\alpha_*\QQ_U^H[n]$$
which induces by push-forward to $X$ a morphism of mixed Hodge modules
$$\beta_*\QQ_V^{H}[n]\longrightarrow \gamma_*\QQ_U^H[n].$$
At the level of filtered right $\Dmod_X$-modules, this corresponds to the natural inclusion
$$\omega_X(*D_1)\hookrightarrow\omega_X\big(*(D_1+D_2)\big).$$
The inclusion between the filtered pieces of level $k-n$ gives
$$I_k(D_1)\otimes\omega_X\big((k+1)D_1\big)\subseteq I_k(D_1+D_2)\otimes \omega_X\big((k+1)(D_1+D_2)\big),$$
which implies the first inclusion in the proposition.
The second inclusion follows directly from Theorem~\ref{subadditivity}.
\end{proof}

\subsection{Semicontinuity theorem}
This section contains the proof of the result on the behavior of Hodge ideals in families.

\begin{proof}[Proof of Theorem \ref{semicontinuity}]
In order to prove the assertion in the theorem, it is enough to prove the following two statements:

\noindent {\bf Claim 1}. The assertion holds when $\dim T =1$.

\noindent{\bf Claim 2}. The set $V_q$ is constructible.

Indeed, if $T\smallsetminus V_q$ were not closed, since it is constructible by Claim 2, there would be an irreducible, $1$-dimensional, locally closed subset $C$ of $T$ and a point $p\in C$, such that $p\in V_q$, but $C\smallsetminus\{p\}$ is contained in $T\smallsetminus V_q$. This contradicts Claim 1, applied to the restriction of $f$ over $C$. 

We begin by proving the first claim, hence we assume that $\dim T =1$. Suppose that $V_q$ is nonempty and let $t_0\in V_q$.
Since $T$ is a curve, it is enough to show that $V_q$ contains an open subset of $T$. 

Let $T'\to T$ be the normalization. If the assertion holds for $X\times_TT'\to T'$, then it clearly also holds for $X\to T$.
Therefore we may and will assume that $T$ (hence also $X$) is smooth. 
If $U$ is an open neighborhood of $s(t_0)$ and $W$ is an open neighborhood of $t_0$, then we may replace $X$ and $T$ by 
$h^{-1}(W\cap s^{-1}(U))$ and $W\cap s^{-1}(U)$,
respectively. Since $s(T)$ is a smooth subvariety of the smooth variety $X$, of codimension $n$, we may assume that the ideal of $s(T)$ in $X$
is generated by $x_{1},\ldots,x_{n}\in\Gamma(X,\shO_X)$. Note that for every $t\in T$, the ideal $\mathfrak{m}_{s(t)}$ is generated by the images of $x_{1},\ldots,x_{n}$
in $\shO_{X_t}$.

By assumption the restriction of $D$ to each $X_t$ is reduced, hence $D$ is reduced as well. 
Since $h$ is smooth, the fiber $X_{t_0}$ is a smooth divisor on $X$. 
Applying Theorem~\ref{restriction}, we conclude that
$$I_k(D_{t_0})\subseteq I_k(D)\cdot\shO_{X_{t_0}}.$$
Since $t_0\in V_q$, we deduce $I_k(D)\cdot\shO_{X_{t_0}}\not\subseteq \mathfrak{m}^q_{s(t_0)}$.
This in turn implies that, after possibly restricting to suitable open subsets, there is a section
$\alpha\in\Gamma \big(X,I_k(D)\big)$ that can be written as $\alpha=Q+P$, with 
$$Q\in (x_{1},\ldots,x_{n})^{q}\quad\text{and}\quad P=\sum_{|u|<q}\varphi_ux^u,$$
where $u=(u_1,\ldots,u_n)$ varies over the tuples of non-negative integers with $|u|=\sum_iu_i<q$,
we have $\varphi_u\in\Gamma(X,\shO_X)$ for every $u$, and $\varphi_{u_0}(s(t_0))\neq 0$ for some $u_0$. 
Let $W_1$ be an open neighborhood of $t_0$ such that $\varphi_{u_0}(s(t))\neq 0$ for every $t\in W_1$.
In this case, we see that for every $t\in W_1$, the section $\alpha\vert_{X_t}$ of $I_k(D)\cdot\shO_{X_t}$
does not lie in $\mathfrak{m}^q_{s(t)}$. 

On the other hand, there 
is an open subset $W_2$ of $T$ such that
$$I_k(D_t)=I_k(D)\cdot\shO_{X_t}\quad\text{for all}\quad t\in W_2.$$
This is a consequence of the second assertion in Theorem~\ref{restriction}.
We then see that the open subset $W_1\cap W_2$ is contained in $V_q$, completing the proof of Claim 1.

We now turn to the proof of Claim 2. 
Arguing by induction on $\dim T$, it is clear that it is enough to show that there is an open subset
$T_0$ of $T$ such that $V_q\cap T_0$ is open. In particular, we are free at any point to replace $T$ by a nonempty open subset.
We may thus assume that $T$ is smooth. More importantly,  we may also use the Generic Flatness theorem: this says that given
any coherent sheaf $\shF$ on $X$,  after replacing $T$
by an open subset, we may assume that $\shF$ is flat over $T$. 

Fixing $k$, after replacing $T$ by an open subset we may assume that 
\begin{equation}\label{eq_restriction_fiber}
I_k(D)\cdot\shO_{X_t}=I_k(D_t)\quad\text{for all}\quad t\in T.
\end{equation}
Indeed, for this we can consider $T$ to have a locally closed embedding in some $\PP^N$.
If $H_1,\ldots,H_r$ are general hyperplanes in $\PP^N$, with $r=\dim T $, then we may apply 
the second assertion in Theorem~\ref{restriction}
(see also Remark~\ref{rem_restriction_general_hypersurfaces}) to 
conclude that
$$I_k(D)\cdot \shO_{L}=I_k(D\vert_L),$$
where $L=h^* H_1 \cap\cdots\cap h^* H_r $.
Since a general fiber of $h$ appears as a disjoint union of connected components of such an $L$, we may assume that (\ref{eq_restriction_fiber}) holds.

Consider now the ideal ${\mathfrak a}\subseteq\shO_X$ defining $s(T)$. By the above, we may assume that both $\shO_X/I_k(D)$ and $\shO_X/{\mathfrak a}^q$ are flat over $T$,
and that the quotient $\shF:=({\mathfrak a}^q+I_k(D))/{\mathfrak a}^q$ is flat over $T$ as well. The former condition implies that
$${\mathfrak a}^q\otimes\shO_{X_t}={\mathfrak m}_{s(t)}^q\quad\text{and}\quad I_k(D)\otimes\shO_{X_t}=I_k(D)\cdot\shO_{X_t}=I_k(D_t),$$
so that the latter condition guarantees that
$$(\shF\otimes\shO_{X_t})_{s(t)}=0\quad\text{if and only if}\quad I_k(D_t)\subseteq {\mathfrak m}_{s(t)}^q.$$
On the other hand, Nakayama's lemma implies that for every $x\in X$, we have 
$$\shF_x=0 \,\, \iff \,\,  (\shF\otimes\shO_{X_{h(x)}})_x  = 0.$$
We thus conclude  that $V_q=s^{-1}\big(X\smallsetminus {\rm Supp}(\shF)\big)$, so that it is open, completing the proof of Claim 2 and consequently of the theorem.
\end{proof}

\section*{References}
\begin{biblist}

\bib{DEL}{article}{
      author={Demailly, Jean-Pierre},
      author={Ein, Lawrence},
      author={Lazarsfeld, Robert},
	title={A subadditivity property of multiplier ideals},
	journal={Michigan Math. J.}, 
	volume={48},
	date={2000}, 
	pages={137--156},
}
\bib{DMST}{article}{
      author={Dimca, Alexandru},
      author={Maisonobe, Phillip},
      author={Saito, Morihiko},
      author={Torelli, T.},
	title={Multiplier ideals, $V$-filtrations and transversal sections},
	journal={Math. Ann.}, 
	volume={336},
	date={2006}, 
	pages={901--924},
}
\bib{HTT}{book}{
   author={Hotta, R.},
   author={Takeuchi, K.},
   author={Tanisaki, T.},
   title={D-modules, perverse sheaves, and representation theory},
   publisher={Birkh\"auser, Boston},
   date={2008},
}
\bib{Lazarsfeld}{book}{
       author={Lazarsfeld, Robert},
       title={Positivity in algebraic geometry II},  
       series={Ergebnisse der Mathematik und ihrer Grenzgebiete},  
       volume={49},
       publisher={Springer-Verlag, Berlin},
       date={2004},
}      
\bib{MP}{article}{
      author={Musta\c t\u a, Mircea},
      author={Popa, Mihnea},
	title={Hodge ideals},
	journal={preprint arXiv:1605:08088}, 
	date={2016}, 
}
\bib{Saito-MHP}{article}{
   author={Saito, Morihiko},
   title={Modules de Hodge polarisables},
   journal={Publ. Res. Inst. Math. Sci.},
   volume={24},
   date={1988},
   number={6},
   pages={849--995},
}
\bib{Saito-MHM}{article}{
   author={Saito, Morihiko},
   title={Mixed Hodge modules},
   journal={Publ. Res. Inst. Math. Sci.},
   volume={26},
   date={1990},
   number={2},
   pages={221--333},
}
\bib{Saito-B}{article}{
   author={Saito, Morihiko},
   title={On $b$-function, spectrum and rational singularity},
   journal={Math. Ann.},
   volume={295},
   date={1993},
   number={1},
   pages={51--74},
}
\bib{Saito-MLCT}{article}{
   author={Saito, Morihiko},
   title={Hodge ideals and microlocal $V$-filtration},
   journal={preprint},
   date={2016},
}
\bib{Schnell-MHM}{article}{
	author={Schnell, Christian},
	title={An overview of Morihiko Saito's theory of mixed Hodge modules},
	journal={preprint arXiv:1405.3096},
	year={2014},
}

\end{biblist}

\end{document}